\documentclass[a4paper,12pt]{amsart}

\usepackage{amsmath,amsthm,amsfonts,amssymb,stmaryrd,mathrsfs,enumitem}
\usepackage{latexsym}
\usepackage{fullpage}
\usepackage{a4,color,palatino,fancyhdr}
\usepackage{graphicx}
\usepackage{float}  
\usepackage[small, bf, margin=90pt, tableposition=bottom]{caption}
\RequirePackage{amssymb}
\RequirePackage[T1]{fontenc}
\usepackage{amscd}
\setlength{\topmargin}{0cm} \setlength{\oddsidemargin}{0cm}
\setlength{\evensidemargin}{0cm} \setlength{\textheight}{23cm}
\setlength{\textwidth}{16cm}
\usepackage{xypic}
\input{xy}
\xyoption{all}
\xyoption{poly}
\usepackage[all]{xy}

\newtheorem{thm}{Theorem}[section]
\newtheorem{prop}[thm]{Proposition}
\newtheorem{defnm}[thm]{Definition}
\newtheorem{lem}[thm]{Lemma}
\newtheorem{coro}[thm]{Corollary}
\newtheorem{rem}{Remark}

\newtheorem{ques}[thm]{Question}
\newtheorem{conj}[thm]{Conjecture}

\numberwithin{equation}{section}



\newcommand\Z{{\mathbb{Z}}}
\newcommand\R{{\mathbb{R}}}
\newcommand\C{{\mathbb{C}}}

\begin{document}

\title{Equivariant cohomology Chern numbers determine equivariant unitary bordism for torus groups}
\author{Zhi L\"u and Wei Wang}

\keywords{Equivariant unitary bordism, Hamiltonian bordism, equivariant cohomology Chern number}
 \subjclass[2010]{57R85, 57R20, 55N22,  57R91}
\thanks{Supported by grants from NSFC (No. 11371093, No. 11301335, No. 11431009 and No. 11661131004)}
\address{School of Mathematical Sciences, Fudan University, Shanghai,
200433, P.R. China.} \email{zlu@fudan.edu.cn}
\address{College of Information Technology, Shanghai Ocean University, 999 Hucheng Huan Road, 201306, Shanghai, P.R.  China.}
\email{weiwang@amss.ac.cn}

\date{}
\begin{abstract}
This paper shows that the integral equivariant cohomology Chern numbers  completely determine the equivariant geometric unitary bordism classes of closed unitary $G$-manifolds, which gives an affirmative
answer to the conjecture posed by Guillemin--Ginzburg--Karshon in ~\cite[Remark H.5, \S3,  Appendix H]{Gullemin}, where $G$ is a torus. As a further application, we also obtain a satisfactory solution of~\cite[Question (A), \S1.1, Appendix H]{Gullemin} on unitary Hamiltonian $G$-manifolds. Our key ingredients in the proof are  the universal toric genus defined by Buchstaber--Panov--Ray and the Kronecker pairing of bordism and cobordism.
Our approach heavily  exploits  Quillen's geometric interpretation of  homotopic unitary cobordism theory.
 Moreover, this method can also be applied to the study of $({\Bbb Z}_2)^k$-equivariant unoriented bordism and can still derive the classical result of tom Dieck.
\end{abstract}
\maketitle

\section{Introduction and main results}

\subsection{Background} In his seminal work~\cite{Thom} , R. Thom introduced the unoriented bordism theory, which corresponds to the infinite orthogonal group $O$.  Since then, various other bordism theories, which correspond to subgroups $\mathbb{G}$ of the orthogonal group $O$ as structure groups of stable tangent bundles or stable normal bundles of compact smooth manifolds, have been studied and established (e.g., see~\cite{Wall, M, N} and for more details, see~\cite{L, Stong}). When $\mathbb{G}$ is chosen as $SO$ (resp. $U, SU$ etc.), the corresponding bordism theory is often called the oriented (resp. unitary, special unitary etc.) bordism theory.
One of the main results in these bordism theories  is that a closed manifold bounds if and only if certain characteristic numbers vanish.
For example, when $\mathbb{G}$ is $O$ or $U$, the corresponding characteristic numbers will be Stiefel--Whitney numbers or Chern numbers. Another main result is that the bordism ring $\Omega_*^{\mathbb{G}}$ associated to the given subgroup $\mathbb{G}$ can also be described quite completely, where $\Omega_*^{\mathbb{G}}$ consists of the bordism classes of all closed smooth manifolds with $\mathbb{G}$-structure (i.e., whose stable tangent bundles admit  $\mathbb{G}$ as structure group). If $\mathbb{G}$ is $O$ (resp. $SO, U$ etc.), then a compact smooth manifold with $\mathbb{G}$-structure  is often called an unoriented (resp. oriented, unitary etc.) manifold.
 Homotopy theoretic  bordism theories $  M\mathbb{G}_*(X)(\cong \Omega_*^{\mathbb{G}}(X)$ by the Pontryagin--Thom construction) and  cobordism theories $M\mathbb{G}^*(X)$ of a topological space $X$ are due to M. Atiyah~\cite{A}, where $M\mathbb{G}$ is the corresponding Thom spectrum of $\mathbb{G}$ and $\Omega_*^{\mathbb{G}}(X)$ is the geometric bordism ring generated by the bordism classes of singular manifolds $f:M\longrightarrow X$ where each $M$ is a smooth closed manifold with $\mathbb{G}$-structure. Note that when $X$ is a point, $\Omega_*^{\mathbb{G}}(X)$ is just $\Omega_*^{\mathbb{G}}$.  These two kinds of theories are generalized homology and cohomology theories.

  \

 In the early 1960s, Conner and Floyd (\cite{cf1, cf2, cf3}) began the study of geometric equivariant bordism theory by combining the ideas of bordism theory and transformation groups. They studied  geometric equivariant unoriented,  oriented and unitary bordism theories for  smooth closed manifolds with periodic diffeomorphisms, and the  subject has also continued to further develop  by extending their ideas or  combining with various  ideas and theories from other research areas  since then. For example,  tom Dieck~\cite{tom Dieck1} introduced and studied the homotopy theoretic equivariant bordisms $M\mathbb{G}_*^G(X)$ and cobordisms $M\mathbb{G}^*_G(X)$ for a $G$-space $X$ where $G$ is a compact Lie group, and recently, Buchstaber--Panov--Ray in \cite{Buch} introduced and studied the  universal toric genus  in a geometric manner.
 The geometric equivariant bordism ring $\Omega_*^{\mathbb{G}, G}(X)$ for a $G$-space $X$ can also be defined in a natural way, and it is formed by singular $G$-manifolds $f: M\longrightarrow X$ where each $M$ is a smooth closed $G$-manifold with $\mathbb{G}$-structure such that the $G$-action preserves the $\mathbb{G}$-structure (in this case, we say that $M$ admits a $G$-equivariant $\mathbb{G}$-structure). When $X$ is a point, $\Omega_*^{\mathbb{G}, G}(X)$ will be denoted by $\Omega_*^{\mathbb{G}, G}$, and it is an equivariant analogue of $\Omega_*^{\mathbb{G}}$. It should be pointed out that in the above definition, the $G$-equivariant $\mathbb{G}$-structure of a smooth closed $G$-manifold with $\mathbb{G}$-structure is equipped on the stably $G$-equivariant tangent bundle of $M$ rather than the  stably $G$-equivariant normal bundle of
the embedding of $M$ in some $G$-representation because there is a substantial difference between both notions (see~\cite{Hanke}). In this paper we will pay more attention to the case in which $\mathbb{G}=U$ (i.e.,  $G$-unitary bordism).   A nice survey of the development of $G$-unitary bordism is contained in the introduction of the paper \cite{Buch} by Buchstaber--Panov--Ray.

\

Comparing with the nonequivariant case, a natural question arises and  is stated as follows:
\begin{ques}  What kinds of equivariant characteristic numbers determine the
equivariant geometric bordism class of a smooth closed $G$-manifold with $\mathbb{G}$-structure?
\end{ques}

\

As far as the authors know, the above question is far from solved. Most known works with respect to the above question have considered mainly the cases $(\mathbb{G}, G)=(O, (\Z_2)^k)$, $(U, T^k\times\Z_m)$, $(SO, \text{finite group})$ (e.g., see~\cite{BT, Gullemin,  H, K,  LW, LT2, S, tom Dieck2, tom Dieck3}). In the following, we mainly give an investigation in the cases $\mathbb{G}=O, U$.

\

tom Dieck first investigated the above question.  In his series of papers (\cite{tom Dieck1, tom Dieck2, tom Dieck3}), by combining the geometric approach of Conner--Floyd (\cite{cf2, cf3}) and the K-theory approach developed by Atiyah, Bott, Segal and Singer (\cite{AB, AS1, AS2}), tom Dieck  introduced the bundling transformation which further develops  ideas of Boardman~\cite{B} and Conner~\cite{Co},   and proved a series of integrality theorems in equivariant (co)bordism theory. Furthermore, he gave an answer to
  the above question in the cases $(\mathbb{G}, G)=(O, (\Z_2)^k)$, $(U, T^k\times\Z_m)$, which is stated as  follows:
 \begin{thm}[tom Dieck]
 \label{tom}
 When $(\mathbb{G}, G)=(O, (\Z_2)^k)$, $\beta\in \Omega_*^{O,(\Z_2)^k}$ is zero if and only if all  equivariant Stiefel--Whitney numbers of $\beta$ vanish.
  When $(\mathbb{G}, G)=(U, T^k\times\Z_m)$,
 $\beta\in \Omega_*^{U,G}$ is zero if and only if all  equivariant K-theoretic  Chern numbers of $\beta$ vanish.
\end{thm}
In their book~\cite[Appendix H]{Gullemin}, Guillemin--Ginzburg--Karshon discussed the problem of calculating the ring $\mathcal{H}_{*}^{G}$ of equivariant Hamiltonian bordism classes of all unitary Hamiltonian $G$-manifolds with integral equivariant cohomology classes ${1\over{2\pi}}[\omega-\Phi]$, where $G$ is a torus.   With respect to the  determination of  the ring $\mathcal{H}_{*}^{G}$,
they posed three series of questions, the first one of which is stated as follows:

 \begin{ques}[{\cite[Question (A), \S1.1, Appendix H]{Gullemin}}] \label{qu} Do  mixed equivariant characteristic numbers form a full system of invariants of Hamiltonian bordism?
 \end{ques}

 On this question, Guillemin--Ginzburg--Karshon constructed a monomorphism
 \begin{equation*}
\Sigma_{G}:\mathcal{H}_{*}^{G}\longrightarrow \Omega_{*+2}^{U, G},
\end{equation*}
so that Question~\ref{qu} is equivalent to asking if
the integral equivariant cohomology Chern numbers determine the equivariant geometric unitary bordism classes
for the ring  $\Omega_*^{U,G}$.  They showed that a closed unitary $G$-manifold $M$ with only isolated fixed-points represents the zero element in $\Omega_*^{U,G}$ if and only if all integral equivariant cohomology Chern numbers of $M$ vanish, which gives a partial solution of  Question~\ref{qu}. Furthermore, they posed the following conjecture without the restriction of isolated fixed-points.
\begin{conj}[{\cite[Remark H.5, \S3,  Appendix H]{Gullemin}}]\label{Conj}
Let $G$ be a torus. Then  $\beta\in \Omega_*^{U,G}$ is zero if and only if all
integral equivariant cohomology Chern  numbers of $\beta$ vanish.
\end{conj}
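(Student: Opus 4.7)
The plan is to transport the geometric equivariant bordism class of a closed unitary $G$-manifold $M$ to homotopy-theoretic cobordism data on the classifying space $BG$, where the Kronecker pairing of bordism with cobordism makes the role of Chern numbers transparent. Concretely, I would introduce the \emph{universal toric genus} $\Phi_G\colon \Omega_*^{U,G}\longrightarrow MU^{*}(BG)$ by assigning to $[M]$ the class represented by the proper complex-oriented projection $EG\times_G M\to BG$ of the Borel construction; by Quillen's geometric description of complex cobordism this produces a well-defined ring homomorphism. The conjecture then becomes the conjunction of two assertions: (i) $\Phi_G$ is injective when $G$ is a torus, and (ii) the integral equivariant cohomology Chern numbers of $[M]$ detect $\Phi_G([M])$ in $MU^{*}(BG)$.

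For assertion (ii) I would use the Kronecker pairing between $MU^{*}(BG)$ and $MU_*(BG)$. The integral equivariant Chern numbers, defined as the pushforwards $\pi^{G}_{!}\bigl(c_{i_1}^G(TM)\cdots c_{i_k}^G(TM)\bigr)\in H^{*}(BG;\Z)$ via fibre integration in the Borel fibration, will be identified with the images, under the natural orientation map $MU\to H\Z$, of Kronecker pairings $\langle \Phi_G([M]),\gamma\rangle$ for an explicit family of classes $\gamma\in MU_*(BG)$ coming from bordism realizations of the monomial generators of $H_*(BG;\Z)$; since $BG$ is a product of copies of $\C P^\infty$, these generators are geometrically accessible via products of subspaces $\C P^{k_i}\hookrightarrow \C P^\infty$, and the pairing can be computed directly from Quillen's pullback-then-pushforward description. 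For assertion (i), I would invoke Theorem~\ref{tom}: the equivariant K-theoretic Chern numbers of $M$ can be recovered from $\Phi_G([M])$ through an equivariant Chern character on $MU^{*}(BG)$, so the fact that such numbers already determine the geometric bordism class over a torus forces $\Phi_G$ to be injective.

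The main obstacle is bridging the gap between integer-valued cohomology Chern numbers and the full set of $\Omega_*^U$-valued characteristic numbers that are a priori needed to detect vanishing in $MU^{*}(BG)$. The nonequivariant Theorem~\ref{non-equiv} shows that integral cohomology Chern numbers suffice at a point, and to globalize this statement over $BG$ I plan to filter $MU^{*}(BG)$ so that the associated graded reduces the detection problem, fibrewise over $BG$, to the nonequivariant statement tensored with $H^{*}(BG;\Z)$; an induction on the filtration degree, carried out compatibly with the equivariant Riemann--Roch relation of Atiyah--Hirzebruch type advertised in the abstract, will then close the argument and yield the conjecture.
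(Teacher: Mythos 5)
Your overall framework is exactly the one the paper uses: push a closed unitary $T^k$-manifold $M$ through the universal toric genus $\Phi\colon \Omega_*^{U,T^k}\to MU^*(BT^k)$, identify $\Phi([M])$ with the Borel fibration $\pi\colon ET^k\times_{T^k}M\to BT^k$ via Quillen's geometric description, and then detect $\Phi([M])$ by Kronecker pairings against $MU_*(BT^k)$ computed as pullback manifolds $\widetilde f^*(ET^k\times_{T^k}M)$. Your route to injectivity of $\Phi$ (deducing it from tom Dieck's K-theoretic statement) differs from the paper's (which cites Hanke--L\"offler directly) but is a legitimate alternative.

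The genuine gap is in your plan for assertion (ii). You correctly identify that the crux is passing from \emph{integer-valued} equivariant Chern numbers to the vanishing of $\Phi([M])$, and you propose to bridge it by filtering $MU^*(BG)$ and inducting through the associated graded, using the equivariant Riemann--Roch relation along the way. That is not needed and is substantially harder than what actually works. The missing idea is an elementary Whitney-product computation on the pullback: for any $f\colon X\to BT^k$, the total Chern class of $\widetilde f^*(ET^k\times_{T^k}M)$ factors as $\pi'^*(c(X))\cdot\widetilde f^*(c^{T^k}(M))$, so every ordinary Chern number of the pullback decomposes (after applying the Gysin map $\pi'_!$, using $\pi'_!\widetilde f^* = f^*\pi_!$ and $\pi'_!\pi'^* = (\cdot)\pi'_!(1)=0$ when $\dim$ drops) into a sum of terms each of which is either a pullback of an integral equivariant Chern number of $M$ or vanishes for dimension reasons. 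Hence vanishing of the integral equivariant Chern numbers of $M$ forces all Chern numbers of every such pullback to vanish, so by Milnor--Novikov each Kronecker pairing is zero in $\Omega_*^U$. You then need to invoke the duality $MU^*(BT^k)\cong\mathrm{Hom}_{MU_*}(MU_*(BT^k),MU_*)$ — which your proposal does not mention — to conclude $\Phi([M])=0$. Without this concrete Chern-class decomposition and the explicit duality step, the filtration sketch does not close the argument; the Riemann--Roch machinery you invoke belongs to the separate equivalence with K-theoretic Chern numbers, not to this conjecture.
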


For the detecting problem, it is natural to analyze the topological data of fixed point sets. This approach has been used successfully in the case when the fixed point sets are isolated (\cite{Gullemin}).  For the general case, we found that the analysis of fixed data seems to be not easy to handle and we turned to consider the geometric presentation of the image $\Phi([M])\in MU_*(BG)$ of {\em the universal toric genus} instead, which admits a natural geometric representation described by Buchstaber--Panov--Ray in \cite{Buch}. In particular, by using the Kronecker pairing, our argument can directly be reduced to the simpler calculation, so that the Boardman map as used in the tom Dieck's work (\cite{tom Dieck2, tom Dieck3}) will not be involved yet.

\subsection{Main results}
The motivation of this paper is mainly supplied by the work of Guillemin--Ginzburg--Karshon  as mentioned above.
The geometric description of the universal toric genus developed by Buchstaber--Panov--Ray in \cite{Buch} provides us much more insight, so that we can carry out our work by utilizing the Kronecker pairing of bordism and cobordism. This leads us to give an affirmative
answer to  Conjecture~\ref{Conj}, which is stated as follows.
\begin{thm}\label{main1}
Conjecture~\ref{Conj} holds.
\end{thm}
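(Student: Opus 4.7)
The ``only if'' direction is the usual bordism invariance: if $M$ is the equivariant boundary of a compact unitary $G$-manifold $W$, then the integral equivariant cohomology Chern numbers of $M$ arise as pushforwards of equivariant Chern classes of $TW$ along the Borel construction $EG\times_G W\to BG$, and so vanish by the long exact sequence of the pair. The content is the converse, so I focus on that.

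The strategy is to factor the bordism-detection problem through the \emph{universal toric genus}
\[
\Phi:\Omega_*^{U,G}\longrightarrow MU^*(BG),\qquad [M]\longmapsto \bigl[EG\times_G M\to BG\bigr],
\]
the right-hand side read geometrically via Quillen's interpretation of $MU^*(BG)$ as equivalence classes of proper complex-oriented maps to $BG$ (formulated through finite-dimensional approximations). The proof then rests on two claims: \textbf{(A)} for $G=T^k$, the map $\Phi$ is injective; \textbf{(B)} the class $\Phi([M])\in MU^*(BG)$ is completely determined by the integral equivariant cohomology Chern numbers of $M$. Together they give the theorem: if all such Chern numbers of $\beta=[M]$ vanish, then $\Phi(\beta)=0$ by (B), hence $\beta=0$ in $\Omega_*^{U,G}$ by (A).

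For (B) I would use the Kronecker pairing
\[
\langle\cdot,\cdot\rangle:MU^*(BG)\otimes_{MU^*}MU_*(BG)\longrightarrow MU^*.
\]
Since $BG\simeq(\C P^\infty)^k$, one has $MU^*(BG)\cong MU^*[[u_1,\ldots,u_k]]$, while $MU_*(BG)$ is free over $MU^*$ with a topological basis dual to the monomials $u^I$, so $\Phi([M])$ is recovered from the values $\langle\Phi([M]),z\rangle\in MU^*$ as $z$ runs over this basis. By Theorem~\ref{non-equiv}, each such $MU^*$-coefficient is in turn detected by its ordinary integral Chern numbers. Unpacking $\Phi$ through Quillen's geometric representative and the complex-oriented push--pull formula identifies these outputs with exactly the integral equivariant cohomology Chern numbers of $M$ in the Guillemin--Ginzburg--Karshon sense, giving (B). For (A), one must lift a null-cobordism of $\Phi([M])$ in $MU^*(BG)$---represented in Quillen's picture by a proper complex-oriented map $V\to BG$ bounding $EG\times_G M\to BG$---back to a genuine equivariant null-cobordism of $M$; the natural construction is the pullback $EG\times_{BG}V$, upon which one verifies that the free $G$-action on the $EG$-factor induces the structure of a compact unitary $G$-manifold with equivariant boundary $M$.

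The main obstacle is (A): converting a homotopy-theoretic null-cobordism into a geometric equivariant one is delicate because a generic Quillen representative is not $G$-equivariant, so a transversality/resolution step preserving the tangential stably almost complex structure and compatible with the $G$-action is needed. Step (B) is by comparison bookkeeping once Quillen's geometric description and the Kronecker pairing are in place, the key subtlety being to track integrality throughout rather than passing to $\mathbb{Q}$.
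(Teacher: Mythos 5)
Your high-level strategy coincides with the paper's: factor through the universal toric genus $\Phi:\Omega_*^{U,T^k}\to MU^*(BT^k)$, use Quillen's geometric description of $MU^*$-classes together with the Kronecker pairing against $MU_*(BT^k)$, and reduce to the nonequivariant statement that integral Chern numbers detect $\Omega_*^U$. Your step (B) is exactly what the paper does in Lemma~\ref{mainlem}: pull back $\pi:ET^k\times_{T^k}M\to BT^k$ along $f:X\to BT^k$, observe that the tangent bundle of the pullback splits as $\pi'^*TX\oplus\widetilde f^*(ET^k\times_{T^k}TM)$, and push forward to see that all ordinary Chern numbers of $\widetilde f^*(ET^k\times_{T^k}M)$ are expressible through $f^*$ of the equivariant Chern numbers of $M$ and hence vanish; then $\Phi([M])=0$ because $MU^*(BT^k)\cong\operatorname{Hom}_{MU_*}(MU_*(BT^k),MU_*)$.

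The one place you misjudge the difficulty is step (A). You flag the injectivity of $\Phi$ as ``the main obstacle'' and sketch a direct geometric lifting argument (pull back a Quillen representative of a null-cobordism to $EG$ and extract an equivariant bounding manifold), which, as you yourself note, runs into equivariant-transversality problems and would not go through as stated. But you do not need to prove (A): the injectivity of $\Phi$ for a torus is a known theorem, essentially due to L\"offler and reproved by Hanke, and the paper simply invokes it as a proposition. Once (A) is taken as a citation, your argument closes and is the paper's argument. So the gap in your writeup is not mathematical so much as bibliographic: recognize (A) as an available input rather than a task, and the rest of your outline is correct.

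Two smaller remarks. First, your reduction in (B) should be done degree by degree: in the paper the partition $\omega$ is chosen with $2|\omega|=\dim\widetilde f^*(ET^k\times_{T^k}M)$, and the mixed terms $\pi'^*(\beta_j)\cdot\gamma_j$ are killed by the projection formula and the vanishing hypothesis, while $\pi'^*(c_\omega(X))$ dies because $\pi'_!(1)=0$ when the fiber has positive dimension; it is worth making this explicit to avoid an integrality slip. Second, for the ``only if'' direction the paper does not use bordism invariance of the equivariant Gysin map directly; instead it observes $\Phi([M])=0$, pairs against the maps $f_I:\mathbb{C}P^{j_1}\times\cdots\times\mathbb{C}P^{j_k}\hookrightarrow BT^k$, and runs an induction on degree to extract each coefficient $n_J$ of $\pi_!(c_\omega^{T^k}(M))\in H^*(BT^k)$. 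Your sketch via the long exact sequence of the pair is morally fine but would need care because of the noncompactness of the Borel construction; the paper's pairing argument sidesteps that issue entirely.
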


As a further consequence, we obtain a satisfactory solution of Question~\ref{qu}.
\begin{coro} Mixed equivariant characteristic numbers determine equivariant Hamiltonian  bordism.
\end{coro}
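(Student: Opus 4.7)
The plan is to deduce the corollary from Theorem~\ref{main1} by pulling back along the Guillemin--Ginzburg--Karshon monomorphism
\[
\Sigma_{G}:\mathcal{H}_{*}^{G}\longrightarrow \Omega_{*+2}^{U,G}.
\]
Concretely, I would argue as follows. Let $[M,\omega,\Phi]\in\mathcal{H}_{*}^{G}$ be a Hamiltonian bordism class, all of whose mixed equivariant characteristic numbers vanish. The goal is to show $[M,\omega,\Phi]=0$.

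The first step is to recall what $\Sigma_G$ does at the cycle level: it produces from $(M,\omega,\Phi)$ a closed unitary $G$-manifold $\Sigma_G(M)$ of real dimension $\dim M+2$, built by combining $M$ with the symplectic/Hamiltonian data coming from $\omega-\Phi$ (roughly a disk-bundle/clutching construction using the moment map). The key compatibility I would verify is that, under this construction, the mixed equivariant characteristic numbers of $(M,\omega,\Phi)$ are, up to a universal and invertible correspondence, expressible as integral equivariant cohomology Chern numbers of the closed unitary $G$-manifold $\Sigma_G(M)$. This is essentially already packaged in \cite[Appendix H]{Gullemin}, where the mixed numbers are defined precisely so that they pair with the equivariant Chern classes of $\Sigma_G(M)$ together with powers of the equivariant symplectic class.

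The second step is then immediate: if all mixed equivariant characteristic numbers of $[M,\omega,\Phi]$ vanish, the correspondence above forces all integral equivariant cohomology Chern numbers of $\Sigma_G([M,\omega,\Phi])\in\Omega_{*+2}^{U,G}$ to vanish. By Theorem~\ref{main1} (Conjecture~\ref{Conj}), this yields $\Sigma_G([M,\omega,\Phi])=0$. Since $\Sigma_G$ is injective, we conclude $[M,\omega,\Phi]=0$, which is exactly the statement that mixed equivariant characteristic numbers separate $\mathcal{H}_{*}^{G}$.

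The only real obstacle is the bookkeeping in the first step, namely checking carefully that the mixed equivariant characteristic numbers in the sense of Guillemin--Ginzburg--Karshon do exhaust (up to integer linear combinations) the integral equivariant cohomology Chern numbers of $\Sigma_G(M)$; everything else is a formal consequence of injectivity of $\Sigma_G$ and Theorem~\ref{main1}. Since Guillemin--Ginzburg--Karshon introduced the mixed numbers precisely to match the Chern-number data on the image of $\Sigma_G$, I expect this matching to reduce to a direct comparison of definitions rather than to require new geometric input.
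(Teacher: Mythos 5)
Your proposal is correct and matches the paper's (implicit) argument: the paper quotes Guillemin--Ginzburg--Karshon's construction of the monomorphism $\Sigma_G$ and their reduction showing Question~\ref{qu} is equivalent to asking whether integral equivariant cohomology Chern numbers determine $\Omega_*^{U,G}$, so the corollary is an immediate consequence of Theorem~\ref{main1}. The only difference is that you spell out the bookkeeping step (matching mixed characteristic numbers with Chern numbers of $\Sigma_G(M)$) that the paper treats as already established in~\cite[Appendix H]{Gullemin}.
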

It is interesting that our approach above can also be applied to the detecting problem of $({\Bbb Z}_2)^k$-equivariant unoriented bordism, and in particular, it can still derive the classical result of tom Dieck in the case $(\mathbb{G}, G)=(O, (\Z_2)^k)$ of Theorem~\ref{tom} by replacing the Boardman map by Kronecker pairing in the original proof of  tom Dieck~\cite{tom Dieck2}.

\

This paper is organized as follows. In Section~\ref{prelim}, we shall review Quillen's
geometric interpretation of homotopic unitary cobordism theory first, which is of essential importance, so that the Kronecker pairing between bordism and cobordism can be calculated in a geometric way. We then review the universal toric genus defined by Buchstaber--Panov--Ray, which can be expressed in terms of Quillen's
geometric interpretation. We also recollect some necessary facts about equivariant Chern classes and equivariant Chern numbers. Then we shall give the proof of Theorem~\ref{main1} in Section~\ref{proof}, which is more geometric. In particular, our approach in the unitary case can  also be  carried out in the study of $({\Bbb Z}_2)^k$-equivariant unoriented bordism, as we shall see  in the final part of Section~\ref{proof}.

\subsection*{Acknowledgements}
 The authors would like to thank Peter Landweber for his comments and suggestions on an earlier version of this paper. We are also grateful to the referee for carefully reading this manuscript and providing a number of valuable thoughts and helpful suggestions, which led to this version.

\section{Preliminaries}\label{prelim}
\subsection{Geometric interpretation of elements in $MU^*(X)$ and Kronecker pairing}\label{geo}

Given a topological space $X$, let $MU_*(X)$ and $MU^*(X)$ be the  complex (homotopic) bordism and cobordism of $X$,  which are defined as
$$MU_*(X)=\lim_{l\longrightarrow\infty} [S^{2l+*},  X_+\wedge MU(l)]$$
and
$$MU^*(X)=\lim_{l\longrightarrow\infty}[S^{2l-*}\wedge X_+, MU(l)]$$
respectively, where $X_+$ denotes the union of $X$ and a disjoint point, and $MU(l)$ denotes the Thom space of the universal complex $l$-dimensional vector bundle over $BU(l)$.
 \vskip .1cm
Geometrically, it is very well-known that elements of $MU_n(X)$ are  regarded as bordism classes of
maps $M\longrightarrow X$ of stably complex  closed $n$-dimensional manifolds $M$ to $X$ since
$MU_n(X)\cong \Omega_n^U(X)$. 

On the geometric interpretation of elements in $MU^*(X)$, Quillen showed in
\cite[Proposition 1.2]{Q}  that for a manifold $X$, $MU^{\pm n}(X)$ is isomorphic to the group formed by cobordism classes of proper {\em complex-oriented maps} $f: M \longrightarrow X$ of dimension $\mp n$, where $\dim X-\dim M=\pm n$.  Recall that when $n$ is even, a {\em complex-oriented map} $f: M \longrightarrow X$ of dimension $\mp n$ is a composition od maps of manifolds
\begin{equation}\label{com-ori}
\begin{CD}
M @ >i>>\mathbb{ E} @>\pi>> X
\end{CD}
\end{equation}
where $\pi: \mathbb{E}\longrightarrow X$  is a complex vector bundle over $X$, and $i: M\longrightarrow \mathbb{E}$  is an embedding  endowed with a complex structure on its normal bundle.
When $n$ is odd, the complex orientation of $f$ will be defined in a same way as above with $\mathbb{E}$ replaced by $\mathbb{E}\times \R$ in~(\ref{com-ori}).
 \vskip .1cm
It is very well-known that as generalized cohomology and homology theory, $MU^*(X)$ and $MU_*(X)$ admit a canonical Kronecker pairing
$$\langle\ ,\ \rangle: MU^n(X)\otimes MU_m(X)\longrightarrow MU_{m-n}$$
where $MU_{m-n}=MU_{m-n}(pt)\cong \Omega_{m-n}^U$.

 \vskip .1cm
This Kronecker pairing can be calculated in a geometric way as follows. For example,  if $\alpha\in MU^{-n}(X)$ is represented by a smooth fiber bundle of closed smooth stably complex manifolds $E\longrightarrow X$ with $\dim E-\dim X=n$ and
$\beta\in MU_m(X)$ can be represented by a smooth map $f: M\longrightarrow X$, then the Kronecker pairing $\langle\alpha, \beta\rangle\in \Omega_{m-n}^U$
is the bordism class of the pull-back $\widetilde{f}^*(E)=E\times_X M$ as shown in the following diagram
\[
\begin{CD}
 E\times_X M   @>\widetilde{f}>>   E \\
    @VVV       @VVV \\
M @>f>> X \\
\end{CD}
\]
(cf.~\cite[D.3.4, Appendix D]{Panov}).

\subsection{The universal toric genus}
Based upon the works of tom Dieck, Krichever and L\"offler (see~\cite{tom Dieck1, tom Dieck3, Krichever, Loffler}),
 Buchstaber--Panov--Ray in \cite{Buch} defined the {\em universal toric genus} $\Phi$ in a geometric manner, which is a ring homomorphism from the geometric unitary $T^k$-bordism ring to the complex cobordism ring of the classifying space $BT^k$
\[
\Phi:\Omega_{*}^{U,T^k}\longrightarrow MU^{*}(BT^k).
\]
Note that, as an $\Omega_*^U$-algebra, $MU^{*}(BT^k)$ is isomorphic to $\Omega_*^U[[u_1, \cdots, u_k]]$ where $u_i$ is the cobordism Chern class $c_1^{MU}(\xi_i)$ of the conjugate Hopf bundle $\xi_i$ over the $i$-th factor of $BT^k$ for $i=1, ..., k$, so $MU^{*}(BT^k)$ can be replaced by $\Omega_*^U[[u_1, \cdots, u_k]]$.
 \vskip .1cm
 Let $[M]_{T^k}\in \Omega_{n}^{U,T^k}$ be an element represented by a closed unitary $T^k$-manifold $M$. Buchstaber--Panov--Ray showed that  $\Phi([M]_{T^k})$ can be defined to be the cobordism class of the complex oriented map $\pi:ET^k\times_{T^k}M \longrightarrow BT^k$.  More precisely, choose a $T^k$-equivariant embedding $i: M\hookrightarrow V$ into a unitary $T^k$-representation space $V$. Then the Borelification of $i$ gives a complex-oriented map
 $$\pi_l: ET^k(l)\times_{T^k} M\hookrightarrow ET^k(l)\times_{T^k}V\longrightarrow BT^k(l)$$
 which determines a cobordism class $\alpha_l$ in $MU^{-n}(BT^k(l))$, where $BT^k(l)=({\Bbb C}P^l)^k$ and $ET^k(l)=(S^{2l+1})^k$.  Since
 $BT^k=\cup_l BT^k(l)$ and $ET^k=\cup_l ET^k(l)$, these cobordism classes $\alpha_l$ form an inverse system. This produces a class $\alpha=\varprojlim \alpha_l$
 in $MU^{-n}(BT^k)$, which is represented geometrically by the complex-oriented map $$\pi: ET^k\times_{T^k} M\hookrightarrow ET^k\times_{T^k}V\longrightarrow BT^k.$$ Then $\Phi([M]_{T^k})$ is defined as this limit $\alpha$.
 \vskip .1cm
The following  result is essentially due to  L\"offler~\cite{Loffler} and Comeza$\widetilde{\text n}$a~\cite{C}  as noted by Hanke in~\cite{Hanke}.
\begin{prop}
The ring homomorphism $\Phi:\Omega_{*}^{U,T^k}\longrightarrow MU^{*}(BT^k)$ is injective.
\end{prop}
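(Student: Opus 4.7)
The plan is to deduce from the hypothesis $\Phi([M]_{T^k})=0$ in $MU^*(BT^k)$ that the stably almost complex $T^k$-manifold $M$ bounds equivariantly, following the strategy of L\"offler and Hanke by recovering the equivariant bordism class from its image via fixed-point localization. First, I would exploit the Kronecker pairing recalled in \S\ref{geo}: the assumption $\Phi([M]_{T^k})=0$ forces $\langle\Phi([M]_{T^k}),\beta\rangle=0$ in $\Omega_*^U$ for every $\beta\in MU_*(BT^k)$, and geometrically each such pairing is the unitary bordism class of the pullback of the Borel bundle $ET^k\times_{T^k}M\to BT^k$ along a representative of $\beta$. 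Letting $\beta$ range over a set of $\Omega_*^U$-module generators of $MU_*(BT^k)$ then produces a complete family of numerical invariants of $[M]_{T^k}$ that all vanish simultaneously.

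Next, I would apply a Quillen-type localization theorem in complex cobordism. Under the identification $MU^*(BT^k)\cong\Omega_*^U[[u_1,\dots,u_k]]$, the class $\Phi([M]_{T^k})$ decomposes, after inverting the $u_i$, as a finite sum indexed by the connected components $F$ of the fixed set $M^{T^k}$, each summand being determined by the tangential stably almost complex structure of $F$ and by the $T^k$-weights of the equivariant normal bundle $\nu_F\to F$. Vanishing of $\Phi([M]_{T^k})$ therefore imposes stringent relations on this collection of fixed-point invariants and, already at this stage, rules out nonzero $[M]_{T^k}$ whenever the $T^k$-action has only isolated fixed points (which recovers the partial result of Guillemin--Ginzburg--Karshon).

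Finally, I would convert these relations into an explicit equivariant null-cobordism by induction on the lattice of closed subgroups of $T^k$. For each subtorus $H\subset T^k$, restriction along $BH\to BT^k$ yields the analogous statement for $H$-equivariant bordism, and the induction hypothesis combined with the localization data permits an equivariant surgery that trades a tubular neighborhood of each fixed stratum for a manifold with boundary, in the spirit of Conner--Floyd and tom Dieck, decreasing the number of isotropy types of $M$ at each step. The main obstacle is precisely this inductive surgery step: one must reconcile the fixed-point data extracted from $\Phi([M]_{T^k})$ with the data required to assemble an equivariant null-bordism, while coherently tracking the stably almost complex structures through each resolution; matching these two sets of invariants across strata is what makes the injectivity of $\Phi$, rather than a purely formal consequence of the Kronecker pairing, a genuine theorem.
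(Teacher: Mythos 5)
The paper does not actually prove this proposition; it records it as a fact that is ``essentially due to Hanke and L\"offler'' and points to \cite{Hanke, Loffler}. So the right comparison is between your outline and what those references do. L\"offler's argument is a structural computation: he uses the Conner--Floyd families exact sequences, building up over the lattice of isotropy types to show that $\Omega_*^{U,T^k}$ is a free $\Omega_*^U$-module whose image under the Pontryagin--Thom map into the homotopy-theoretic $T^k$-bordism $MU_*^{T^k}$ can be read off from fixed-point data; Hanke's 2005 paper refines this to a split injection of the geometric theory into the homotopy-theoretic one for abelian compact Lie groups, again via a localization argument, and the passage to $MU^*(BT^k)$ is then tom Dieck's bundling transformation. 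In neither case is an equivariant null-bordism assembled by explicit surgery; the content is entirely in exactness and freeness statements that control the families filtration.

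Your proposal starts with a tautological observation (if $\Phi([M]_{T^k})=0$ then all its Kronecker pairings vanish), then invokes localization to extract fixed-point relations, and then proposes an inductive equivariant surgery over isotropy types to realize a null-bordism. The first two steps are fine as orientation but carry no force; the third step is exactly where the theorem lives, and you yourself flag it as ``the main obstacle.'' This is a genuine gap, not a deferrable detail: converting the relations among normal-bundle weight data into an actual compact unitary $T^k$-manifold with boundary $M$, coherently across strata and with a compatible stably almost complex structure, is precisely the hard content that the families-exact-sequence machinery in L\"offler and Hanke is designed to handle without ever constructing the bounding manifold by hand. As written, the proposal is an honest roadmap pointing at the right literature, not a proof; to close it you would need to either carry out the surgery program in detail (which is not how the cited sources proceed and would require substantial new work) or switch to their strategy of factoring $\Phi$ through $MU_*^{T^k}$ and invoking the split injectivity of the Pontryagin--Thom map together with the injectivity of the bundling map on that image.
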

\begin{coro}\label{tgcoro}
If $M$ is a closed unitary $T^k$-manifold, then $M$ is null-bordant in $\Omega_*^{U,T^k}$ if and only if  the complex-oriented map
$\pi:ET^k\times_{T^k}M \longrightarrow BT^k$ represents the zero element in $MU^*(BT^k)$.
\end{coro}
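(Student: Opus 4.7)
The plan is to observe that this corollary is an immediate unpacking of the preceding proposition together with the geometric definition of the universal toric genus $\Phi$, so the proof amounts to reading off the two statements in the right order, without any new geometric input.

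First, I would recall from the construction of $\Phi$ given just above that, for a closed unitary $T^k$-manifold $M$ of real dimension $n$, the class $\Phi([M]_{T^k}) \in MU^{-n}(BT^k)$ is by \emph{definition} the inverse-limit class of the complex oriented map
$$\pi:\ ET^k\times_{T^k}M \hookrightarrow ET^k\times_{T^k}V \longrightarrow BT^k,$$
obtained by Borelifying a $T^k$-equivariant embedding $i:M\hookrightarrow V$ into a unitary $T^k$-representation $V$. Consequently the class represented by $\pi$ in $MU^*(BT^k)$ is literally $\Phi([M]_{T^k})$, so the right-hand condition of the corollary is the same as the condition $\Phi([M]_{T^k})=0$.

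Second, I would invoke the preceding proposition (due to Hanke and L\"offler), which asserts that $\Phi:\Omega_*^{U,T^k}\to MU^*(BT^k)$ is injective. Chaining the two facts gives
$$[M]_{T^k}=0 \ \text{in}\ \Omega_*^{U,T^k} \ \iff\ \Phi([M]_{T^k})=0 \ \iff\ [\pi]=0 \ \text{in}\ MU^*(BT^k),$$
which is exactly the corollary. I do not expect a real obstacle here: the only point worth flagging is that the class represented by $\pi$ is independent of the auxiliary choices of the equivariant embedding $i$ and representation $V$, but this independence is already built into the well-definedness of the universal toric genus (it follows from a standard cobordism-of-embeddings argument applied level-by-level in the Milnor-type inverse system defining $BT^k$), so nothing further needs to be verified.
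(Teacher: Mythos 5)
Your argument is exactly the one the paper intends: the corollary follows immediately from the geometric definition of $\Phi([M]_{T^k})$ as the cobordism class of $\pi$ together with the injectivity of $\Phi$ from the preceding proposition, which is why the paper states it without further proof. Your remark on independence of the auxiliary choices is a reasonable point to flag but is indeed already subsumed in the well-definedness of $\Phi$.
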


\subsection{Equivariant Chern classes and equivariant Chern numbers}
Let $M$ be a closed unitary $T^k$-manifold. Then applying the Borel construction to the stable tangent bundle $TM$ of $M$ gives a vector bundle $ET^k\times_{T^k}TM$  over $ET^k\times_{T^k} M$.
\begin{defnm}
{\rm The {\em total equivariant cohomology Chern class} of $M$ is defined to be the total Chern class
of the vector bundle $ET^k\times_{T^k}TM$ over $ET^k\times_{T^k} M$, i.e.}
\begin{eqnarray*}
    c^{T^k}(M): &=& c(ET^k\times_{T^k}TM) \\
      &=&1+ c_1(ET^k\times_{T^k}TM)+c_2(ET^k\times_{T^k}TM)+
\cdots \\
      &=&1+ c^{T^k}_1(M)+c_2^{T^k}(M)+\cdots \in H^*(ET^k\times_{T^k} M)=H^*_{T^k}(M).
  \end{eqnarray*}
\end{defnm}

Let $p:M\longrightarrow pt$ be the constant map, and let  $p_!^{T^k}: H^*(ET^k\times_{T^k}M)\longrightarrow H^*(BT^k)$ be the equivariant Gysin map  induced by $p$.   Then,
the {\em integral equivariant cohomology Chern numbers} of $M$ are defined to be:
\begin{eqnarray*}
c_\omega^{T^k}[M]_{T^k}: =  p_!^{T^k}(c_\omega^{T^k}(M)),
\end{eqnarray*}
each of which is a homogeneous  polynomial of degree $2|\omega|-\dim M$ in the polynomial ring $H^*(BT^k)={\Bbb Z}[x_1, ..., x_k]$ with $\deg x_i=2$, where
$\omega=(i_1,  i_2,  \cdots, i_s)$ is a partition of $|\omega|=i_1+i_2+\cdots+i_s$, and $c_\omega^{T^k}(M)$ means the product $c_{i_1}^{T^k}(M)c_{i_2}^{T^k}(M)\cdots
c_{i_s}^{T^k}(M)$.
 \vskip .1cm
Note that the map $\pi: ET^k\times_{T^k}M\longrightarrow BT^k$ is the Borelification of  $p:M\longrightarrow pt$, so $p_!^{T^k}$ is often replaced by the Gysin
map $\pi_!$  in this paper.

\section{Proof of main theorem and equivariant unoriented bordism}\label{proof}

\subsection{Determine equivariant cohomology Chern numbers by ordinary Chern numbers}
Consider the equivariant cohomology Chern numbers $c_\omega^{T^k}[M]_{T^k}\in H^*(BT^k;\mathbb{Z})$
for the partition $\omega=(i_1,  i_2,  \cdots, i_s)$ of weight $|\omega|=i_1+i_2+\cdots+i_s$.
Since $H^*(BT^k;\Z)\cong \Z[x_1,x_2, \cdots, x_k]$ with $\deg x_i=2$, the equivariant Chern number $c_\omega^{T^k}[M]_{T^k}$ admits the form:
\[
c_\omega^{T^k}[M]_{T^k}=\sum_{J} n^{\omega}_J x^J,
\]
where the $k$-tuples of natural numbers $J=(j_1, j_2, ..., j_k)$ ranges over all $k$-partitions of weight $|\omega|-n$
and $x^J$ means $x_1^{j_1}x_2^{j_2}\cdots$ $x_k^{j_k}$, 
and the coefficient $n^{\omega}_J\in {\Bbb Z}$. It follows that to determine the equivariant cohomology Chern numbers $c_\omega^{T^k}[M]_{T^k}$ is equivalent to determine the coefficient $n^{\omega}_J$ for each $J=(j_1, j_2, ..., j_k)$. 

The coefficient $n^{\omega}_J$ is closely related to the ordinary Chern numbers of certain manifolds.
Indeed, for $J=(j_1, j_2, ..., j_k)$, let
$(\prod_{i=1}^{k} S^{2j_i+1})\times_{T^k} M$ be the pull-back of the natural inclusion $\prod_{i=1}^{k}\C P^{j_i} \hookrightarrow BT^k$: 
\[
\begin{CD}
(\prod_{i=1}^{k} S^{2j_i+1})\times_{T^k} M@>\widetilde{f_J}>>   ET^k\times_{T^k}M \\
@V \pi_J VV       @V \pi VV \\
\prod_{i=1}^{k}\C P^{j_i} @>f_J>> BT^k \\
\end{CD}
\]
First, we find that:
\begin{lem} 
$$n_J^{\omega}x^J=(\pi_J)_!(c_{\omega}((\prod_{i=1}^{k} S^{2j_i+1})\times_{T^k} TM)),$$
where $(\pi_J)_!$ is the Gysin map induced by $\pi_J$.
\end{lem}
\begin{proof}
By the definition of equivariant Chern numbers, 
\[c_\omega^{T^k}[M]_{T^k}=\pi_!(c_{\omega}(ET^k\times_{T^k}TM))=\sum_{J} n^{\omega}_J x^J\in H^*(BT^k;\Z).
\]
Since $(\prod_{i=1}^{k} S^{2j_i+1})\times_{T^k} TM=\tilde{f_J}^*(ET^k\times_{T^k}TM)$
and Gysin maps are commutative in the pull-back square, i.e, $f_J^*\pi_!=(\pi_J)_!\tilde{f_J}^*$, then we have:
\[
\begin{array}{ccl}
n_J^{\omega}x^J	 & = & f_J^*\pi_!(c_{\omega}(ET^k\times_{T^k}TM)) \\ 
	& = & (\pi_J)_!\tilde{f_J}^*c_{\omega}(ET^k\times_{T^k}TM) \\ 
	&=  & (\pi_J)_!(c_{\omega}((\prod_{i=1}^{k} S^{2j_i+1})\times_{T^k} TM)).
\end{array} 
\]
\end{proof}

Let $\omega=(i_1,  i_2,  \cdots, i_s)$ and $\omega'=(i'_1,  i'_2,  \cdots, i'_s)$ be two $s$-partitions, we say $\omega'\leqslant\omega$ provided $i'_j\leqslant i_j $ for each $j$ and we say $\omega'<\omega$ if $\omega'\leqslant\omega$ and $\omega'\neq\omega$. We also denote $\omega\pm\omega'$ the partition $(i_1\pm i'_1,  i_2\pm i_2',  \cdots, i_s\pm i'_s)$.

For each partition $\omega$ and each $k$-partition $J$, the corresponding coefficient $n_{J}^{\omega}$ is determined by the ordinary Chern number $c_{\omega}[(\prod_{i=1}^{k} S^{2j_i+1})\times_{T^k} M]$ and the coefficients $n_{J'}^{\omega'}$ with $J'<J,\omega'<\omega$.  More precisely, we have:
\begin{prop}\label{mainprop}
When dim$M>0$, one has:
\[
c_{\omega}[(\prod_{i=1}^{k} S^{2j_i+1})\times_{T^k} M]=n_{J}^{\omega}+
\sum_{
	\substack{
\omega'+\omega''=\omega	\\ 
	\omega'<\omega,\omega''<\omega,J'<J
	}
	}n_{J'}^{\omega'}m^{\omega''}_{(J-J')},
\]
where $m_{J-J'}$ is the coefficient of $x^{J-J'}$ in the Chern class:
\[
c_{\omega''}({\prod_{i=1}^{k}\C P^{j_i} })\in H^*(\prod_{i=1}^{k}\C P^{j_i} ;\Z)=\Z [x_1,x_2,\cdots,x_k]/(x_1^{j_1+1},\cdots,x_k^{j_k+1}).
\]
\end{prop}
\begin{proof}
The stable tangent bundle of $(\prod_{i=1}^{k} S^{2j_i+1})\times_{T^k} M$ admits a decomposition:
\[
T((\prod_{i=1}^{k} S^{2j_i+1})\times_{T^k}M) =((\prod_{i=1}^{k} S^{2j_i+1})\times_{T^k} TM) \oplus \pi_J^{*}T(\prod_{i=1}^{k}\C P^{j_i}).
\]
By Cartan formula, we have:
\[
c_{\omega}((\prod_{i=1}^{k} S^{2j_i+1})\times_{T^k} M)=c_{\omega}((\prod_{i=1}^{k} S^{2j_i+1})\times_{T^k} TM)+
\pi_J^{*}c_{\omega}(\prod_{i=1}^{k}\C P^{j_i})
\]
\[
 +\sum_{\substack{
 		\omega'+\omega''=\omega	\\ 
 		\omega'<\omega,\omega''<\omega
 		}}c_{\omega'}((\prod_{i=1}^{k} S^{2j_i+1})\times_{T^k} TM)\pi_J^{*}c_{\omega''}(\prod_{i=1}^{k}\C P^{j_i}).
\]
Taking $(\pi_J)_!$ to both sides, and we obtain:
\[
\begin{array}{ll}
&c_{\omega}[(\prod_{i=1}^{k} S^{2j_i+1})\times_{T^k} M]x^J  \ \ \ \  \ \ \small{\text{$(\pi_J)_!$ is isomorphic on the top cohomology}}         \\
=  & (\pi_J)_!c_{\omega}((\prod_{i=1}^{k} 
S^{2j_i+1})\times_{T^k} M) \\ 
 = &(\pi_J)_!(c_{\omega}((\prod_{i=1}^{k} S^{2j_i+1})\times_{T^k} TM))+ (\pi_J)_!(\pi_J^{*}c_{\omega}(\prod_{i=1}^{k}\C P^{j_i}))\\ 
+  & \sum_{\substack{	
	\omega'+\omega''=\omega	\\ 
	\omega'<\omega,\omega''<\omega
	} }(\pi_J)_!(c_{\omega'}((\prod_{i=1}^{k} S^{2j_i+1})\times_{T^k} TM)\pi_J^{*}c_{\omega''}(\prod_{i=1}^{k}\C P^{j_i}))\\
 =   & n_J^{\omega}x^J+(\pi_J)_!(1)c_{\omega}(\prod_{i=1}^{k}\C P^{j_i})   \ \ \ \   \text{ $(\pi_J)_!(1)$=0, when dim$M$>0 } \\ 
 +  &  \sum_{\substack{
 		\omega'+\omega''=\omega	\\ 
 		\omega'<\omega,\omega''<\omega,J'<J
 		} }c_{\omega''}(\prod_{i=1}^{k}\C P^{j_i}) (\pi_J)_!(c_{\omega'}((\prod_{i=1}^{k} S^{2j_i+1})\times_{T^k} TM)).\\
\end{array} 
\]
Note that:
\[
\begin{array}{lll}
(\pi_J)_! (c_{\omega'}((\prod_{i=1}^{k} S^{2j_i+1})\times_{T^k} TM))&=& f_J^*(\pi_!c_{\omega'}(ET^k\times_{T^k} TM))\\
&=& f_J^*((\sum  n_{J'}^{\omega'})x^{J'})\\
&=& \sum  n_{J'}^{\omega'} x^{J'}, 
\end{array}
\]
because $f_J^*(x^{J'})=x^{J'}$ when $J'\leqslant J$. 
Denote $c_{\omega''}(\prod_{i=1}^{k}\C P^{j_i})=\sum_{J''}m^{\omega''}_{J''}x^{J''}$ and we have:
\[
\begin{array}{ll}
&c_{\omega}[(\prod_{i=1}^{k} S^{2j_i+1})\times_{T^k} M]x^J\\
=   & n_J^{\omega}x^J+ \sum_{\substack{
	\omega'+\omega''=\omega	\\ 
	\omega'<\omega,\omega''<\omega,J'<J
	} } (\sum  n_{J'}^{\omega'} x^{J'})(\sum_{J''}m^{\omega''}_{J''}x^{J''}) \\
=&  (n_J^{\omega}+\sum_{\substack{
	\omega'+\omega''=\omega	\\ 
	\omega'<\omega,\omega''<\omega,J'<J
	} } n_J^{w'}m^{\omega''}_{J-J'})x^J.
\end{array} 
\]
Therefore, by comparing the coefficients of both sides, one has the equation: 
\[
c_{\omega}[(\prod_{i=1}^{k} S^{2j_i+1})\times_{T^k} M]=n_{J}^{\omega}+
\sum_{
	\substack{
		\omega'+\omega''=\omega	\\ 
		\omega'<\omega,\omega''<\omega,J'<J
} 
}n_{J'}^{\omega'}m^{\omega''}_{(J-J')}.
\]
\end{proof}
Clearly, one has
\begin{coro}
$c_{\omega}[(\prod_{i=1}^{k} S^{2j_i+1})\times_{T^k} M]\in (n_{J'}^{\omega'})\subset \Z$ with $\omega'\leqslant\omega,J'=(j'_1,\cdots,j'_k)\leqslant J$, 
where $(n_{J'}^{\omega'})$ is the ideal in $\Z$ generated by the integers $n_{J'}^{\omega'}$.
	
\end{coro}

On the contrary, when dim$M=n>0$, the coefficient $n_J^{\omega}$ of the equivariant Chern number $c_\omega^{T^k}[M]_{T^k}$ is determined by the ordinary Chern numbers $c_{\omega'}[(\prod_{i=1}^{k} S^{2j'_i+1})\times_{T^k} M]$ of the manifolds $(\prod_{i=1}^{k} S^{2j_i'+1})\times_{T^k} M$ with partition $\omega'\leqslant\omega$ and $J'\leqslant J$. 
Indeed, we have:
\begin{prop}\label{mainequation}
$n_{J}^{\omega}\in (c_{\tilde{\omega}}[(\prod_{i=1}^{k} S^{2\tilde{j_i}+1})\times_{T^k} M])\subset \Z$ with $\tilde{\omega}\leqslant\omega,\tilde{J}\leqslant J$, 
where the ideal $(c_{\tilde{\omega}}[(\prod_{i=1}^{k} S^{2\tilde{j_i}+1})\times_{T^k} M])$ in $\Z$ is generated by the integers $c_{\tilde{\omega}}[(\prod_{i=1}^{k} S^{2\tilde{j_i}+1})\times_{T^k} M]$.
\end{prop}	
\begin{proof}
Note that when the weight $|\omega|<n$, $n_J^{\omega}=0$ for all $J$. 
When the weight $|\omega|=n$, the corresponding $k$-partition is $J_0=(0,\cdots,0)$ and $n^{\omega}_{J_0}=c_{\omega}[(\prod_{i=1}^{k} S^{2j_i+1})\times_{T^k} M]$. 

When $|\omega|>n$, by Proposition \ref{mainprop}, we see 
$$n_{J}^{\omega}
-c_{\omega}[(\prod_{i=1}^{k} S^{2j_i+1})\times_{T^k} M]\in (n_{J'}^{\omega'})\subset \Z$$
with $\omega'<\omega,J'<J$. When $|\omega'|<|\omega|$, by the induction of the weight $|\omega|$, we obtain that 
$$(n_{J'}^{\omega'})\subset (c_{\omega''}[(\prod_{i=1}^{k} S^{2j''_i+1})\times_{T^k} M])\subset \Z,$$ 
with 
$\omega''\leqslant\omega',J''\leqslant J'$. Hence, we have:
$$n_{J}^{\omega}\in 
c_{\omega}[(\prod_{i=1}^{k} S^{2j_i+1})\times_{T^k} M] + (n_{J'}^{\omega'})\subset (c_{\tilde{\omega}}[(\prod_{i=1}^{k} S^{2\tilde{j_i}+1})\times_{T^k} M])\subset   \Z$$
with $\tilde{\omega}\leqslant\omega,\tilde{J}\leqslant J$.

\end{proof}
\begin{thm}\label{maintech}
The equivariant Chern number $c_\omega^{T^k}[M]_{T^k}=0$ for all the partition $\omega$ if and only if the ordinary Chern number $c_{\omega}[(\prod_{i=1}^{k} S^{2j_i+1})\times_{T^k} M]=0$ for all $\omega$ and all $J=(j_1, j_2, ..., j_k)$.
\end{thm}
\begin{proof}
When dim$M=0$, $M$ is the disjoint union of some isolated points with trivial torus action, and the statement certainly holds. Now we assume dim$M>0$.
Suppose $c_\omega^{T^k}[M]_{T^k}=0$ for all $\omega$, then $n_{\tilde{J}}^{\tilde{\omega}}=0$ for all $\tilde{\omega}$ and $\tilde{J}$. By Proposition~\ref{mainequation}, $c_{\omega}[(\prod_{i=1}^{k} S^{2j_i+1})\times_{T^k} M]\in (n_{\tilde{J}}^{\tilde{\omega}})=(0)\subset \Z$. The inverse direction is similar.	
\end{proof}

\subsection{Proof of Theorem~\ref{main1}}\label{man}

Let $M$ be a closed unitary $T^k$-manifold. Since the universal toric genus
\[
\Phi:\Omega_{*}^{U,T^k}\longrightarrow MU^{*}(BT^k)
\]
is injective,
it is sufficient to show that the image $\Phi([M]_{T^k})=0$ if and only if
all integral equivariant cohomology Chern numbers  of $M$ vanish.
 
 \vskip .1cm
Geometrically, the image $\Phi([M]_{T^k})\in MU^*(BT^k)$ is represented by the map $\pi: ET^k\times_{T^k} M \longrightarrow BT^k.$
Since the Kronecker pairing
$$MU^*(BT^k)\otimes MU_*(BT^k)\longrightarrow MU_*.$$
defines an isomorphism
\[
MU^*(BT^k)\cong Hom_{MU_*}(MU(BT^k), MU_*),
\]
because $MU^*(BT^k)$ is a free $MU_*$-module in even dimensional generators. 

Therefore, the image $\Phi([M]_{T^k})$ is zero if and only if its evaluation with all the $MU_*$ generators of
$MU_*(BT^k)$  is zero.

According to \cite{AD,Kochman}, the natural inclusion maps $\C P^n\hookrightarrow BT^1$ are the $MU_*$-generators of $MU_*(BT^1)$, and therefore one can take the natural inclusion maps
$\prod_{i=1}^{k}\C P^{j_i}\hookrightarrow BT^k$ as $MU_*$-generators of $MU_*(BT^k)$. 

By Quillen's construction, the Kronecker pairing of $\Phi([M]_{T^k})$ and $ [\prod_{i=1}^{k}\C P^{j_i}\rightarrow BT^k]$
defines the bordism class of the manifold:
\[
<\Phi([M]_{T^k}), [\prod_{i=1}^{k}\C P^{j_i}\rightarrow BT^k]>=[(\prod_{i=1}^{k} S^{2j_i+1})\times_{T^k} M]\in MU_*.
\]
Therefore, $\Phi([M]_{T^k})$ is zero if and only if $[(\prod_{i=1}^{k} S^{2j_i-1})\times_{T^k} M]=0\in MU_*$ for all
the inclusions $\prod_{i=1}^{k}\C P^{j_i}\hookrightarrow BT^k$ for all $J=(j_1, j_2, ..., j_k)$.

On the other hand, by Theorem~\ref{maintech}, we see the equivariant Chern numbers $c_\omega^{T^k}[M]_{T^k}=0$ if and only if the ordinary Chern numbers $c_{\omega}[(\prod_{i=1}^{k} S^{2j_i-1})\times_{T^k} M]=0$ for all $\omega$ and all $J=(j_1, j_2, ..., j_k)$.

Thus, we conclude that the equivariant Chern number $c_\omega^{T^k}[M]=0$ for all $\omega$ if and only if
$\Phi([M])=0$. Hence, Theorem~\ref{main1} has been proved.

\

Together with the above arguments, we conclude:
\begin{thm}\label{first-thm}
Let $M$ be a closed unitary $T^k$-manifold. Then the following statements are equivalent.
\begin{enumerate}
\item[(1)] $M$ is null-bordant in $\Omega_*^{U, T^k}$.
\item[(2)]  All integral equivariant cohomology Chern numbers of $M$ vanish.
\item[(3)] For any partition $J=(j_1, ..., j_k)$ with each $j_i\geq 0$, $(\prod_{i=1}^k S^{2j_i+1})\times_{T^k}M$ is null-bordant in $\Omega_*^U$.
\item[(4)] The fibration $\pi: ET^k\times_{T^k}M\longrightarrow BT^k$ is null-cobordant in $MU^*(BT^k)$.
\end{enumerate}
\end{thm}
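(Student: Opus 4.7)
The plan is to close the equivalence by assembling the ingredients already at hand and splicing (3) into the cycle. The equivalence (1) $\Leftrightarrow$ (4) is already Corollary~\ref{tgcoro}, and (1) $\Leftrightarrow$ (2) is exactly the content of the two propositions immediately preceding. So the only remaining task is to link (3) to the rest, which I would do via the short chain (1) $\Rightarrow$ (3) $\Rightarrow$ (2).

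For (1) $\Rightarrow$ (3): assume $[M]_{T^k}=0$. By injectivity of the universal toric genus one gets $\Phi([M]_{T^k})=0$ in $MU^*(BT^k)$, and hence the Kronecker pairing of $\Phi([M]_{T^k})$ with any element of $MU_*(BT^k)$ vanishes. Given a partition $I=(j_1,\ldots,j_k)$ with $j_i\geq 0$, take $X=\C P^{j_1}\times\cdots\times \C P^{j_k}$ together with the natural inclusion $f_I:X\hookrightarrow BT^k$. As noted in the discussion preceding Lemma~\ref{mainlem}, the pull-back is
$$\widetilde{f}_I^{*}(ET^k\times_{T^k}M)=\Bigl(\prod_{i=1}^k S^{2j_i+1}\Bigr)\times_{T^k}M,$$
so $\langle \Phi([M]_{T^k}),[f_I]\rangle=[(\prod_i S^{2j_i+1})\times_{T^k}M]=0$ in $\Omega_*^U$, which is (3).

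For (3) $\Rightarrow$ (2): the key observation is that the induction on $\mathcal{N}-m$ in the proof of the second preceding proposition never actually uses the full hypothesis $[M]_{T^k}=0$; at each stage it only uses the specific consequence that $\widetilde{f}_I^{*}(ET^k\times_{T^k}M)=(\prod_i S^{2j_i+1})\times_{T^k}M$ is null-bordant in $\Omega_*^U$ for the relevant partitions $I$, and this is exactly what (3) provides. The base case $\mathcal{N}=m$ is obtained from $I=(0,\ldots,0)$, which gives $[M]=0$ in $\Omega_*^U$ and so kills all ordinary Chern numbers, and the inductive step carries over verbatim. Hence the same argument yields (3) $\Rightarrow$ (2) without modification.

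I do not anticipate a serious obstacle here; this theorem is essentially a consolidation of the preceding work. The only conceptual subtlety is recognising that the induction in the Chern-number-vanishing proposition already factors through the single geometric family $\{(\prod_i S^{2j_i+1})\times_{T^k}M\}_I$, so that this family alone suffices to detect triviality in $\Omega_*^{U,T^k}$; once this is noted, the four statements fall into equivalence immediately.
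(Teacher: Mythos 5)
Your proposal is correct and matches what the paper intends: the theorem is stated after the two propositions and Corollary~\ref{tgcoro} with only the remark ``Together with the above arguments, one has,'' so the paper itself is treating it as a consolidation. Your explicit splicing of (3) into the cycle via (1)~$\Rightarrow$~(3)~$\Rightarrow$~(2) is the natural way to do this, and your observation that the induction in the proof of (1)~$\Rightarrow$~(2) already factors entirely through the Kronecker pairings against the specific classes $[f_I:\C P^{j_1}\times\cdots\times\C P^{j_k}\hookrightarrow BT^k]$ — i.e.\ through exactly the bordism classes named in (3) — is precisely the point that makes (3)~$\Rightarrow$~(2) go through verbatim.
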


\subsection{Equivariant unoriented bordism} Our approach above can also be carried out in the case of equivariant unoriented bordism.
Let $\mathfrak{N}_*^{({\Bbb Z}_2)^k}$ (or $\Omega_*^{O,({\Z_2})^k}$) be the ring formed by the equivariant bordism classes of all unoriented closed smooth $({\Bbb Z}_2)^k$-manifolds, and let
$MO^*(X)$ be the unoriented (homotopic) cobordism ring of a topological space $X$, i.e.,
$$MO^*(X)=\lim_{l\longrightarrow\infty}[S^{l-*}\wedge X_+, MO(l)].$$
Then Quillen's geometric approach on $MU^*(X)$ can be carried out in the case of $MO^*(X)$, so that the Kronecker pairing
$$\langle\ ,\ \rangle: MO^*(X)\otimes MO_*(X)\longrightarrow MO_*\cong \mathfrak{N}_*$$
can be calculated in a geometric way similar to the unitary case as in Subsection~\ref{geo} (cf.~\cite[D.2.8, D.3.4, Appendix D]{Panov}), where $\mathfrak{N}_*$ is the nonequivariant Thom unoriented bordism ring.
In~\cite{tom Dieck2}, tom Dieck showed that there is also a monomorphism
$$\Phi_{\Bbb R}: \mathfrak{N}_*^{({\Bbb Z}_2)^k}\longrightarrow MO^*(B({\Bbb Z}_2)^k).$$
It is well-known that $MO^*(B({\Bbb Z}_2)^k)=\varprojlim MO^*(B({\Bbb Z}_2)^k(n))$ where $B({\Bbb Z}_2)^k(n)=({\Bbb R}P^n)^k$ and $B({\Bbb Z}_2)^k=\cup_nB({\Bbb Z}_2)^k(n)$. Then, applying the finite approximation method yields that for a class $[M]_{({\Bbb Z}_2)^k}\in  \mathfrak{N}_*^{({\Bbb Z}_2)^k}$, the image $\Phi_{\Bbb R}([M]_{({\Bbb Z}_2)^k})$ is geometrically represented by the fibration $\pi: E({\Bbb Z}_2)^k\times_{({\Bbb Z}_2)^k}M \longrightarrow B({\Bbb Z}_2)^k$.
\begin{thm}\label{unoriented}
Let $M$ be a closed smooth $({\Bbb Z}_2)^k$-manifold. Then the following statements are equivalent.
\begin{enumerate}
\item[(1)] $M$ is null-bordant in $\mathfrak{N}_*^{({\Bbb Z}_2)^k}$.
\item[(2)]  All equivariant  Stiefel--Whitney numbers of $M$ vanish.
\item[(3)] For any partition $I=(j_1, ..., j_k)$ with each $j_i\geq 0$, $(\prod_{i=1}^k S^{j_i+1})\times_{({\Bbb Z}_2)^k}M$ is null-bordant in $\mathfrak{N}_*$.
\item[(4)] The fibration $\pi:  E({\Bbb Z}_2)^k\times_{({\Bbb Z}_2)^k}M \longrightarrow B({\Bbb Z}_2)^k$ is null-cobordant in $MO^*(B({\Bbb Z}_2)^k)$.
\end{enumerate}
\end{thm}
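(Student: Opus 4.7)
The plan is to adapt the proof of Theorem~\ref{first-thm} to the unoriented setting by making the systematic replacements: integer coefficients by $\Bbb Z/2$, Chern classes $c_i$ by Stiefel--Whitney classes $w_i$, $MU$ by $MO$, $T^k$ by $({\Bbb Z}_2)^k$, $\Bbb C P^n$ by $\Bbb R P^n$, and Buchstaber--Panov--Ray's universal toric genus $\Phi$ by tom Dieck's monomorphism $\Phi_{\Bbb R}$. All the supporting tools transfer with only cosmetic change: Quillen's geometric description of cobordism and the resulting computation of the Kronecker pairing $\langle\cdot,\cdot\rangle: MO^*(X) \otimes MO_*(X) \to \mathfrak{N}_*$ as a bundle pullback applies to $MO$ (see~\cite[D.2.8, D.3.4, Appendix D]{Panov}); the Borelified fibration $\pi: E({\Bbb Z}_2)^k \times_{({\Bbb Z}_2)^k} M \to B({\Bbb Z}_2)^k$ represents $\Phi_{\Bbb R}([M]_{({\Bbb Z}_2)^k})$; the Gysin formalism of Section~\ref{appendix} is valid mod $2$; and Thom's theorem identifies null-bordism in $\mathfrak{N}_*$ with vanishing of all Stiefel--Whitney numbers.

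The equivalence $(1) \Leftrightarrow (4)$ is immediate from the injectivity of $\Phi_{\Bbb R}$, i.e. the mod-$2$ analogue of Corollary~\ref{tgcoro}. The implication $(1) \Rightarrow (3)$ is equally immediate: if $\Phi_{\Bbb R}([M]_{({\Bbb Z}_2)^k}) = 0$ in $MO^*(B({\Bbb Z}_2)^k)$, its Kronecker pairing with any basic class $[f_I: \prod_i \Bbb R P^{j_i+1} \hookrightarrow B({\Bbb Z}_2)^k]$ vanishes, and geometrically this pairing is $[(\prod_i S^{j_i+1}) \times_{({\Bbb Z}_2)^k} M] \in \mathfrak{N}_*$. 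Conversely $(3) \Rightarrow (4)$ follows from the identification $MO^*(B({\Bbb Z}_2)^k) = \mathrm{Hom}_{\mathfrak{N}_*}(MO_*(B({\Bbb Z}_2)^k), \mathfrak{N}_*)$ together with the fact that the classes $\{[f_I]\}$ generate $MO_*(B({\Bbb Z}_2)^k)$ as an $\mathfrak{N}_*$-module, so vanishing of every basic pairing forces the homomorphism $\Phi_{\Bbb R}([M]_{({\Bbb Z}_2)^k})$ itself to be zero.

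It remains to establish $(1) \Leftrightarrow (2)$. For $(2) \Rightarrow (1)$ I would prove the direct analogue of Lemma~\ref{mainlem}: for an arbitrary smooth map $f: X \to B({\Bbb Z}_2)^k$ from a closed smooth manifold $X$, expand
\[
w_\omega(\widetilde{f}^*(E({\Bbb Z}_2)^k \times_{({\Bbb Z}_2)^k} M)) = \widetilde{f}^*(w_\omega^{({\Bbb Z}_2)^k}(M)) + \pi'^*(w_\omega(X)) + \sum_j \pi'^*(\beta_j)\cdot \gamma_j
\]
using the splitting of the tangent bundle of the pullback and multiplicativity of $w$, then apply $p_!\pi'_!$ together with Proposition~\ref{equ-non-equ} (mod $2$) and the hypothesis $\pi_!(w_\omega^{({\Bbb Z}_2)^k}(M)) = 0$ for all $\omega$ to force every ordinary Stiefel--Whitney number of the pullback to vanish; Thom's theorem then yields null-bordism of the pullback in $\mathfrak{N}_*$. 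This makes every Kronecker pairing of $\Phi_{\Bbb R}([M]_{({\Bbb Z}_2)^k})$ zero, hence (4), hence (1). For $(1) \Rightarrow (2)$ I would run the inductive argument of the second proposition of Section~\ref{man}: write $\pi_!(w_\omega^{({\Bbb Z}_2)^k}(M)) = \sum_J n_J x^J$ in $H^*(B({\Bbb Z}_2)^k; \Bbb Z/2) = \Bbb Z/2[x_1, \ldots, x_k]$ with $\deg x_i = 1$, induct on $\mathcal{N} - m$ with $\mathcal{N} = |\omega|$ and $m = \dim M$, use the base case $f: \{pt\} \to B({\Bbb Z}_2)^k$ (which pairs with $\Phi_{\Bbb R}([M]_{({\Bbb Z}_2)^k}) = 0$ to force $[M] = 0$ in $\mathfrak{N}_*$ and hence all ordinary Stiefel--Whitney numbers of $M$ vanish), and at each inductive step pair against the embedding $f_I: \prod_i \Bbb R P^{j_i+1} \hookrightarrow B({\Bbb Z}_2)^k$ for a partition $I$ with $|I| = \mathcal{N} - m$, using the expansion above together with the induction hypothesis to isolate $n_I$ and concluding $n_I = 0$ from the fact that $t^I$ is a nonzero monomial in $H^{|I|}(\prod_i \Bbb R P^{j_i+1}; \Bbb Z/2)$.

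The main obstacle I anticipate is purely bookkeeping: the degree conventions ($\deg x_i = 1$ and $\deg w_i = i$ rather than $2$ and $2i$) and the real-projective indexing (where the partition $(j_1, \ldots, j_k)$ in (3) corresponds to $\Bbb R P^{j_i+1}$ rather than $\Bbb R P^{j_i}$) must be tracked carefully, and one must verify that $f_I^*$ sends the relevant monomials $x^J$ with $|J| = |I|$ to a linearly independent set of nonzero classes in $H^{|I|}(\prod_i \Bbb R P^{j_i+1}; \Bbb Z/2) = \bigl(\Bbb Z/2[t_1, \ldots, t_k]/(t_i^{j_i+2})\bigr)_{|I|}$ so that the coefficient $n_I$ can be extracted. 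Beyond this, no new input is required and the argument of Section~\ref{first proof} transports essentially word for word.
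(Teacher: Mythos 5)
Your proposal is exactly the approach the paper intends: the authors' entire proof of Theorem~\ref{unoriented} consists of the remark that it ``follows closely that of Theorem~\ref{first-thm} in a very similar way'' and is left as an exercise, and you have carried out precisely that dictionary of substitutions ($\mz\to\mz/2$, Chern $\to$ Stiefel--Whitney, $MU\to MO$, $T^k\to(\mz_2)^k$, $\C P^n\to\R P^n$, $\Phi\to\Phi_{\Bbb R}$), with the Kronecker-pairing computation and the Gysin formalism transferring verbatim.

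One point worth settling as you finalize the bookkeeping you flagged: in the unitary case the dictionary gives $S^{2j_i+1}/S^1=\C P^{j_i}$ with $j_i\geq 0$, which includes $\C P^0=\{pt\}$; this case is actually used in the base step of the induction ($X=\{pt\}$ recovers $[M]\in\Omega^U_*$) and is needed so that the classes $\{[f_I]\}$ exhaust an $\Omega^U_*$-basis of $MU_*(BT^k)$. The unoriented analogue is $S^{j_i}/\mz_2=\R P^{j_i}$ with $j_i\geq 0$, again including $\R P^0=\{pt\}$. The paper's statement~(3) reads $S^{j_i+1}$ with $j_i\geq 0$, i.e.\ $\R P^{j_i+1}$ with every factor at least $\R P^1$, which omits the factors $\R P^0$ and hence does \emph{not} cover an $\mathfrak{N}_*$-basis of $MO_*(B(\mz_2)^k)$; with that literal reading, the step $(3)\Rightarrow(4)$ you sketch would not go through. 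This is almost certainly a typo in the statement (the intended exponent is $S^{j_i}$), and your proof should use the corrected indexing, with the induction and degree counts then matching the unitary case exactly ($\deg w_\omega=\mathcal N=\dim M+|I|$ with $X=\prod_i\R P^{j_i}$, $|I|=\mathcal N-m$).
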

\begin{proof}
The proof follows closely that of Theorem~\ref{first-thm} in a very similar way. We would like to leave it as an exercise to the reader.
\end{proof}

\begin{rem}
 Theorem~\ref{unoriented} tells us that $({\Bbb Z}_2)^k$-equivariant unoriented  bordism is determined by $({\Bbb Z}_2)^k$-equivariant  Stiefel--Whitney numbers. This result is essentially due to  tom Dieck with an argument of involving the Boardman map $$B: MO^*(B(\Z_2)^k)\longrightarrow H^*(B(\Z_2)^k;\Z_2)\widehat{\otimes}H_*(BO;\Z_2)$$ (see \cite{tom Dieck2}). Here we exactly replace the use of the Boardman map by the Kronecker pairing.
\end{rem}

It is well-known that as an $\mathfrak{N}_*$-algebra, $MO^*(B({\Bbb Z}_2)^k)$ is isomorphic to $\mathfrak{N}_*[[v_1, ..., v_k]]$, where $v_i$ denotes the first cobordism Stiefel--Whitney class of the canonical line bundle over the $i$-th factor of $B({\Bbb Z}_2)^k$ for $i=1, ..., k$.  Thus, in the sense of Buchstaber--Panov--Ray \cite{Buch},  we can also define {\em $({\Bbb Z}_2)^k$-equivariant universal genus}, which is an $\mathfrak{N}_*$-algebra homomorphism, as follows:
$$\Phi_{\Bbb R}: \mathfrak{N}_*^{({\Bbb Z}_2)^k}\longrightarrow \mathfrak{N}_*[[v_1, ..., v_k]],$$
by
$\Phi_{\Bbb R}([M]_{({\Bbb Z}_2)^k})=\sum_{\omega=(j_1, ..., j_k)}g_\omega^{\Bbb R}(M)v^\omega$, where   $v^\omega=v_1^{j_1}\cdots v_k^{j_k}$ for $\omega=(j_1, ..., j_k)$ with $j_i\geq 0$, and $g_\omega^{\Bbb R}(M)\in \mathfrak{N}_*$ with $\dim g_\omega^{\Bbb R}(M)=|\omega|+\dim M$.
\vskip .1cm
 The determination of coefficients $g_\omega^{\Bbb R}(M)$ depends upon the choices of a dual bases in $MO_*(B({\Bbb Z}_2)^k)$ to the basis $\{v^\omega\}$ in $MO^*(B({\Bbb Z}_2)^k)$.
 Buchstaber--Panov--Ray's approach  in the unitary case provides us much more insight on the determination of coefficients $g_\omega^{\Bbb R}(M)$.
 In our case, since it was shown in~\cite[Corollary 6.4]{LT} that any real Bott manifold is null-bordant in $\mathfrak{N}_*$, we may employ the real Bott manifolds to  realize a dual basis in $MO_*(B({\Bbb Z}_2)^k)$ of the basis $\{v^\omega\}$ in $MO^*(B({\Bbb Z}_2)^k)$, so that the coefficients $g_\omega^{\Bbb R}(M)$ can be represented by manifolds $G_\omega^{\Bbb R}(M)=(S^1)^\omega\times_{({\Bbb Z}_2)^\omega}M$, where the action of $({\Bbb Z}_2)^\omega=({\Bbb Z}_2)^{j_1}\times\cdots\times ({\Bbb Z}_2)^{j_k}$ on $(S^1)^\omega=(S^1)^{j_1}\times\cdots\times (S^1)^{j_k}$ is coordinatewise, and on $M$ via the representation $(g_{1,1}, ..., g_{1, j_1}; ...; g_{k,1}, ..., g_{k, j_k}) \longmapsto (g_{1, j_1}, ..., g_{k, j_k})$. The argument is close to that of the unitary case in \cite[\S 9.2]{Panov}. 



\end{document}